\newcommand\ons{Ozsv{\'a}th and Szab{\'o}}
\newcommand\os{{Ozsv{\'a}th-Szab{\'o}}}
\newcommand{\bZ}{\mathbb{Z}}
\newcommand{\bQ}{\mathbb{Q}}
\newcommand{\pq}{\frac{p}{q}}
\newcommand{\into}{\hookrightarrow}
\newcommand{\HFhat}{\widehat{\operatorname{HF}}}
\newcommand{\Khred}{\widetilde{\operatorname{Kh}}}
\newcommand{\rk}{\operatorname{rk}}
\newtheorem{theorem}{Theorem}
\newtheorem{definition}[theorem]{Definition}
\newtheorem{corollary}[theorem]{Corollary}
\newtheorem{proposition}[theorem]{Proposition}
\newtheorem{remark}[theorem]{Remark}
\newtheorem{lemma}[theorem]{Lemma}
\newtheorem*{namedtheorem}{\theoremname}
\newcommand{\theoremname}{testing}
\title[Does Khovanov homology detect the unknot?]{Does Khovanov homology detect the unknot?}
\date{May 28, 2008}
\author[Matthew Hedden]{Matthew Hedden}
\address{Department of Mathematics, Massachusetts Institute of Technology, MA.}
\email{mhedden@math.mit.edu}
\urladdr{http://www-math.mit.edu/~mhedden}
\author[Liam Watson]{Liam Watson}
\thanks{Matthew Hedden was partially supported by NSF Grant DMS-0706979}
\thanks{Liam Watson was supported by a Canadian Graduate Scholarship (NSERC)}
\address{D\'epartement de Math\'ematiques, Universit\'e du Qu\'ebec \`a Montr\'eal,  Montr\'eal Canada.}
\email{liam.watson@cirget.ca}
\urladdr{http://www.cirget.uqam.ca/~liam}
\begin{document}

\begin{abstract}

We determine a wide class of knots, which includes unknotting number one knots, within which Khovanov homology detects the unknot.
A corollary is that the Khovanov homology of many satellite knots, including the Whitehead double, detects the unknot. 
\end{abstract}

\maketitle

\section{Introduction}

The question of the existence of a non-trivial knot with trivial Jones polynomial \cite{Jones1985} has received considerable attention since the discovery of this revolutionary knot invariant. While the question remains open, Khovanov's categorification of the Jones polynomial \cite{Khovanov2000} gives rise to a natural reformulation: Is there a non-trivial knot for which the reduced Khovanov homology has rank 1? We establish a class of knots for which the answer is no.

\begin{theorem}\label{thm:main}  Suppose $K\into S^3$ has tangle unknotting number one. Then $\rk\Khred(K)=1$ if  and only if $K$ is the unknot.
\end{theorem}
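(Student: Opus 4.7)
The natural approach is to pass to the double branched cover and exploit the Ozsv\'ath--Szab\'o spectral sequence from $\widetilde{\operatorname{Kh}}(\bar K;\mathbb{F}_2)$ converging to $\widehat{\operatorname{HF}}(\Si(K);\mathbb{F}_2)$. The rank-$1$ hypothesis gives $\rk\widehat{\operatorname{HF}}(\Si(K))\le 1$, and since $\widehat{\operatorname{HF}}$ of a closed $3$-manifold is nontrivial, the rank is exactly $1$. By the general bound $\rk\widehat{\operatorname{HF}}(Y)\ge |H_1(Y;\bZ)|$ (with $\Si(K)$ always a rational homology sphere when $K$ is a knot), I conclude $\Si(K)$ is an integer homology sphere L-space.

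Next I would use the Montesinos trick. A rational tangle replacement in a ball $B\subset S^3$ transforming $K$ into the unknot lifts, under the branched double cover, to Dehn surgery on a knot $C\subset \Si(U)=S^3$: the core of the solid torus $\Sigma(B,T_U)$. So $\Si(K)\cong S^3_{p/q}(C)$ for some slope $p/q$ determined by the two tangle slopes, and the integer homology sphere condition forces $p=\pm 1$, i.e.\ $\Si(K)=S^3_{\pm 1/n}(C)$.

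Now I would invoke Ozsv\'ath--Szab\'o's characterization of L-space surgeries: if $S^3_{1/n}(C)$ is an L-space with $n\ge 1$, then $C$ is an L-space knot and $\tfrac{1}{n}\ge 2g(C)-1$. This forces $g(C)\le 1$. When $g(C)=0$, $C$ is the unknot by the fact that the only genus-zero knot is $U$, so $\Si(K)=S^3_{1/n}(U)=S^3$; the positive solution of the Smith conjecture then yields $K=U$. The mirror case $-1/n$ is handled symmetrically.

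The main obstacle, and what I expect to require the most work, is ruling out the borderline case $g(C)=1$ with $n=1$. By Ghiggini's theorem the only genus-one L-space knot is the right-handed trefoil, which would give $\Si(K)=S^3_{\pm 1}(T(2,3))=\mp\Sigma(2,3,5)$, the Poincar\'e homology sphere. To exclude this possibility I would try to use the additional structure carried by $\Si(K)$: namely, the covering involution with one-dimensional fixed set whose quotient is $(S^3,K)$. Analyzing the finitely many such involutions on $\pm\Sigma(2,3,5)$ (Seifert fibered involutions on a small Seifert fibered space) should show the branch knot is $T(3,5)$ (or otherwise combinatorially constrained), and I would then need to verify that any such $K$ fails to have tangle unknotting number one, or alternatively to show directly that such a $K$ cannot have $\rk\widetilde{\operatorname{Kh}}(K)=1$ (e.g.\ by a direct Khovanov or knot Floer computation). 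The reverse direction is immediate, since $\widetilde{\operatorname{Kh}}(U)=\mathbb{Z}$.
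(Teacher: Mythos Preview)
Your approach is essentially the paper's: spectral sequence to $\HFhat$, Montesinos trick to realize $\Sigma_K^2$ as surgery, L-space surgery bounds plus Ghiggini to reduce to the trefoil, then an endgame. The differences are only in the endgame, and one of your proposed routes there does not work.

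First, your option (a) in the trefoil case --- showing the resulting $K$ fails to have tangle unknotting number one --- is a dead end. The knot $T(3,5)$ (which is $10_{124}$, the $(-2,3,5)$-pretzel) \emph{does} have tangle unknotting number one; indeed, that is precisely how it arose in your own argument, as the branch set for $S^3_{+1}$ on the trefoil. So only your option (b) is viable.

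Second, the paper's endgame is more direct than your proposed classification of involutions on $\pm\Sigma(2,3,5)$. You already possess a specific tangle decomposition $K=T_0\cup T_1$ with $T_0$ the quotient of the trefoil complement by its strong inversion. Since the trefoil has a \emph{unique} strong inversion (Schreier), $T_0$ is determined, and $T_1$ is the rational tangle corresponding to the $+1$-framing. Thus $K$ is pinned down as $10_{124}$ without ever analyzing the symmetries of the Poincar\'e sphere. One then simply computes $\rk\Khred(10_{124})=7\ne 1$, which is your option (b). Your involution-classification route would ultimately arrive at the same knot, but the tangle picture already hands it to you.

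Finally, your invocation of the Smith conjecture for the $g(C)=0$ case is correct; the paper leaves this step implicit (equivalently, $C$ unknotted makes $T_0$ rational, hence $K$ is two-bridge with determinant $1$, hence unknotted).
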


Here, $\Khred$ denotes the reduced Khovanov homology introduced in \cite{Khovanov2003}; we work with $\bZ_2$ coefficients throughout. A knot has tangle unknotting number one if the unknot may be obtained from it by exchanging one rational tangle for another (see Definition \ref{def:tangleunknotting}).  This appears to be a rather large class of knots. In particular, unknotting number one knots have tangle unknotting number one.

The above theorem becomes particularly interesting in light of the following corollary, which indicates that the Khovanov homology of many satellite knots can be used to detect the unknot.  To describe it, let $P(K)$ be the satellite knot of $K$ with pattern $P$.  By pattern, we mean that $P$ is the knot in the solid torus which is identified with the neighborhood of $K$ in the satellite construction.

\newpage
\begin{corollary}\label{cor:satellite} Let $P\hookrightarrow S^1\times D^2$ be a knot in the solid torus.  Suppose that 
\begin{itemize}
\item  For any $K$, $P(K)$ has tangle unknotting number one.
\item $P(K)\simeq U$ if and only if  $K\simeq U$, where $U$ is the unknot.
\end{itemize}
Then $\rk\Khred(P(K))=1$ if and only if $K\simeq U$.  In particular, the reduced Khovanov homology of the satellite operation defined by $P$ detects the unknot.  
\end{corollary}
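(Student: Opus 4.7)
The plan is to apply Theorem~\ref{thm:main} directly to the satellite knot $P(K)$. First I would observe that, given any companion $K$, the first hypothesis of the corollary tells us that $P(K)$ has tangle unknotting number one, so Theorem~\ref{thm:main} applies to $P(K)$ and produces the biconditional $\rk\Khred(P(K))=1 \Longleftrightarrow P(K)\simeq U$. Second, I would invoke the hypothesis that $P(K)\simeq U$ if and only if $K\simeq U$ in order to replace the right-hand side. Chaining the two equivalences then yields $\rk\Khred(P(K))=1 \Longleftrightarrow K\simeq U$, which is the main assertion of the corollary.

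The ``in particular'' clause is immediate from this: knowing $\Khred(P(K))$ alone suffices to decide whether the companion $K$ is trivial, so reduced Khovanov homology composed with the satellite pattern $P$ detects the unknot on the companion side.

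There is essentially no internal obstacle to this argument; all of the topological content is carried by Theorem~\ref{thm:main}, and the deduction is a two-step logical chain. The genuine work in \emph{deploying} the corollary lies in verifying the two hypotheses for any specific pattern $P$ of interest. For the untwisted Whitehead double, for instance, a single crossing change at the clasp unknots $W(K)$ for every companion $K$, so $W(K)$ has unknotting number, and hence tangle unknotting number, at most one; and the fact that $W(K)$ is trivial if and only if $K$ is trivial is classical, following from the JSJ/companion torus structure of satellite knots. Analogous case-by-case verifications are what is required in order to turn the corollary into a concrete statement for any other pattern one wishes to feed into it.
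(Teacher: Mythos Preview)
Your proposal is correct and matches the paper's own argument essentially verbatim: apply Theorem~\ref{thm:main} to $P(K)$ using the first hypothesis, then chain with the second hypothesis to replace $P(K)\simeq U$ by $K\simeq U$. The paper likewise notes that the substantive work lies in verifying the hypotheses for particular patterns, just as you remark.
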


For instance, the above corollary shows that the Khovanov homology of the untwisted Whitehead double, the $(2,1)$-cable or, more generally, the infinite family of satellites defined by the figure in Section \ref{sec:cor}, all detect the unknot.   

Theorem \ref{thm:main} relies on the fact that there is a spectral sequence from the reduced Khovanov homology of $K$ to the \os \ Floer homology  of the branched double cover of $K$.  Tangle unknotting number one knots are those knots whose branched double covers can be obtained by surgery on (other) knots in the three-sphere (see Lemma \ref{lemma:tanglebranched}).  The theorem then follows from formulas which compute the Floer homology of manifolds obtained by Dehn surgery on a knot in terms of its knot Floer homology invariants, together with known properties of these latter invariants (e.g. they detect whether the knot is fibered). 

The corollary should be compared to \cite{Hedden2008} which uses the fact that Floer homology detects the Thurston norm to show that the Khovanov homology of the $2$-cable link detects the unknot.  In particular, it would be interesting to understand the full extent to which a combination of surgery formula techniques  and analysis of the Thurston norm of branched covers can be applied to understand  whether Khovanov homology detects the trivial link.

\subsection*{Acknowlegement} This work benefitted from useful discussions with Steve Boyer.  
\section{Tangles and two-fold branched covers}

Let $\tau\into B^3$ be a pair of embedded arcs in the 3-ball, intersecting the boundary $\partial B^3 = S^2$ transversally in 4 points. The pair $T=(B^3,\tau)$ is referred to as a tangle. We consider tangles up to homeomorphism in the sense of Lickorish \cite{Lickorish1981}; such a homeomorphism need not fix the boundary in general.

Tangles arise naturally as component pieces of knots. Given a knot $K\into S^3$ and an embedding $S^2\into S^3$ such that $S^2$  intersects $K$ transversely in 4 points, the resulting decomposition of $S^3$ into 3-balls restricts to a decomposition of $K$ into tangles, denoted $K=T_0\cup T_1$.

If $\Sigma_K^2$ denotes the two-fold branched cover of $S^3$ branched over the knot $K$, then the decomposition $K=T_0\cup T_1$ lifts to a decomposition of $\Sigma_K^2$ into manifolds with torus boundary. The two-fold branched cover of a tangle is always a manifold with torus boundary, and a tangle is {\em rational} if and only if this cover is a solid torus \cite{Lickorish1981}. 

\begin{definition}\label{def:tangleunknotting} A knot $K$ has tangle unknotting number one if there is a decomposition $K=T_0\cup T_1$ with the property that $T_0\cup T_2$ is the unknot, where $T_1$ and $T_2$ are rational tangles. \end{definition}  

Notice that if $K$ has tangle unknotting number one then $\Sigma_K^2\cong S^3_{p/q}(K')$ where $S^3_{p/q}(K')$ denotes $\pq$-framed surgery on a knot $K'\into S^3$. Moreover, $K'$ must be 
{\em strongly invertible} i.e. there is an orientation-preserving involution of $S^3$ which preserves $K'$ as a set and reverses the orientation of $K'$.  Conversely, to any strongly invertible knot $K'$ we may associate a tangle $T$ by taking the quotient of the knot complement $S^3\smallsetminus\nu({K'})$ by the $\bZ_2$-action from the strong inversion. Recall that a strong inversion on $K'$ gives an involution on $S^3\smallsetminus\nu({K'})$ fixing a pair of arcs intersecting the boundary transversally in 4 points \cite{Waldhausen1969}. The strands $\tau\into B^3$ of the tangle then correspond to the image of the fixed point set in the quotient. In summary we have the following.

\begin{lemma}\label{lemma:tanglebranched}
The two-fold branched cover of a knot $K\hookrightarrow S^3$ is obtained by surgery on a knot $K'\hookrightarrow S^3$ if and only if $K$ has tangle unknotting number one.  Moreover, $K'$ is strongly invertible.  
\end{lemma}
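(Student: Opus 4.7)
The plan is to prove both implications by exploiting the fact --- recalled just before the lemma --- that a tangle is rational if and only if its branched double cover is a solid torus, together with Waldhausen's classification of involutions on a solid torus whose fixed set meets the boundary in four points.

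For the forward implication, I would start from a tangle unknotting number one decomposition $K=T_0\cup T_1$ and a rational tangle $T_2$ with $T_0\cup T_2=$ unknot, then lift both sides to branched double covers:
\[
\Sigma_K^2 \cong \widetilde{T_0}\cup \widetilde{T_1}, \qquad S^3 \cong \widetilde{T_0}\cup \widetilde{T_2},
\]
with the gluings taking place along the torus $\partial \widetilde{T_0}$. Rationality of $T_1$ and $T_2$ makes both $\widetilde{T_1}$ and $\widetilde{T_2}$ solid tori, so the second equation identifies $\widetilde{T_0}$ with the exterior of a knot $K'\into S^3$ (the core of $\widetilde{T_2}$), while the first then identifies $\Sigma_K^2$ as a Dehn filling of that exterior, that is, as $\pq$-surgery on $K'$ for some slope determined by the change of meridian from $\widetilde{T_2}$ to $\widetilde{T_1}$. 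For strong invertibility I would note that the covering involution of $S^3=\widetilde{T_0}\cup \widetilde{T_2}$ is an orientation-preserving involution of $S^3$ with unknotted fixed set, preserving each piece setwise; on $\widetilde{T_2}$ it is an involution fixing a pair of arcs meeting the boundary in four points, and by Waldhausen such an involution must reverse the orientation of the core $K'$.

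For the reverse implication, I would start from $\Sigma_K^2\cong S^3_{\pq}(K')$ equipped with its branched covering involution and take the quotient. The quotient of the decomposition $\Sigma_K^2 = (S^3\setminus\nu(K'))\cup V$ splits the base $S^3$ into two balls, the images of the fixed arc sets form tangles $T_0$ and $T_1$, and rationality of $T_1$ follows from $V$ being a solid torus. Replacing the filling slope $\pq$ by $\infty$ recovers the trivial Dehn filling, namely $S^3$, and the corresponding rational tangle $T_2$ then satisfies $T_0\cup T_2=$ unknot, so $K$ has tangle unknotting number one.

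The main technical point I expect to require care is equivariance: one must verify that the covering involution on $\Sigma_K^2$ actually preserves the surgery decomposition, restricts to a strong inversion on $K'$, and acts on each solid torus $\widetilde{T_i}$ as the standard Waldhausen involution whose quotient genuinely is a rational tangle. This is essentially the only nontrivial step, but it follows cleanly from the uniqueness of such solid-torus involutions together with the explicit correspondence between rational tangles and their branched double covers.
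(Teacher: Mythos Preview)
Your forward implication is correct and is precisely the argument the paper gives in the paragraph preceding the lemma: lift the tangle decomposition, identify $\widetilde{T_0}$ with a knot exterior via $S^3=\widetilde{T_0}\cup\widetilde{T_2}$, read off the Dehn filling from $\widetilde{T_1}$, and use the covering involution on $\widetilde{T_2}$ together with Waldhausen to exhibit the strong inversion on $K'$.

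For the reverse implication there is a gap. You correctly flag equivariance as the crux, but your proposed resolution does not work: Waldhausen's theorem classifies involutions \emph{on} a solid torus with the prescribed fixed-arc structure; it says nothing about whether the covering involution on $\Sigma_K^2$ preserves a particular embedded solid torus. An abstract homeomorphism $\Sigma_K^2\cong S^3_{p/q}(K')$ carries no equivariance data, so the step ``the quotient of the decomposition $\Sigma_K^2=(S^3\setminus\nu(K'))\cup V$ splits the base $S^3$ into two balls'' is unjustified as written. The paper argues the converse differently, starting from a strongly invertible $K'$ and forming the tangle $T_0$ directly as the quotient of $S^3\smallsetminus\nu(K')$ by the strong inversion; rational fillings then correspond to rational-tangle closures of $T_0$, the trivial filling giving the unknot. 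This shows that every surgery on a strongly invertible knot is the branched double cover of \emph{some} tangle-unknotting-number-one knot --- which is all that is used downstream --- but, read literally as a biconditional about a fixed $K$, the paper's argument has the same lacuna yours does. Only the forward direction is invoked in the proof of Theorem~\ref{thm:main}.
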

\section{The proof of Theorem \ref{thm:main}}

The proof relies heavily on the machinery of Heegaard-Floer homology introduced by \ons\ \cite{OSz2004}. We will make use of the ``hat" version of the theory $\HFhat$, and take coefficients in $\bZ_2$. 

In \cite[Corollary 1.2]{OSz2005-1}, \ons\ show the following:
\[\left|H_1(\Sigma_L^2;\bZ)\right|\le\rk\HFhat(\Sigma_L^2)\le\rk\Khred(L)\]
This inequality follows from the fact that that there is a spectral sequence with $E_2$ term given by the reduced Khovanov homology of the mirror of $L$ converging to $\HFhat(\Sigma_L^2)$ \cite[Theorem 1.1]{OSz2005-1}. 

Now suppose that $K$ is a tangle unknotting number one knot. Then $\Sigma_K^2\cong S^3_{p/q}(K')$, and by passing to the mirror image if necessary we may assume that $\pq>0$ (notice that since we are considering knots the case $\pq=0$ is omitted). In this setting we obtain \[p\le\rk\HFhat(S^3_{p/q}(K'))\le\rk\Khred(K)\] and Theorem \ref{thm:main} follows immediately if $p>1$. Therefore we may reduce to the case of $\frac{1}{q}$-framed surgeries so that $S^3_{1/q}(K')$ is a $\bZ$-homology sphere. Specifically, our task is to consider the case $\rk\HFhat(S^3_{1/q}(K'))=1$. That is, the case when surgery on a knot in $S^3$ yields a $\bZ$-homology sphere L-space. Recall that a $\bQ$-homology sphere $Y$ is an L-space whenever $\left|H_1(Y;\bZ)\right|=\rk\HFhat(Y)$. In the spirit of \cite{OSz2006}, we have the following.
\begin{proposition}\label{prp:main} Suppose $K'$ is a non-trivial knot and $S^3_{1/q}(K')$ is an $L$-space.  Then $q=1$ and $K'$ is the trefoil knot. \end{proposition}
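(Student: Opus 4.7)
The plan is to leverage two powerful consequences of the existence of an L-space surgery on a nontrivial knot: a constraint on the possible slopes, and the fiberedness of such knots. First, I would apply the \os\ classification of L-space surgery slopes: for any nontrivial knot $K'$ that admits some positive L-space surgery, $S^3_{p/q}(K')$ is an L-space if and only if $p/q \geq 2g(K')-1$, where $g$ denotes the Seifert genus. Since the hypothesis puts us in the setting of the preceding discussion (after passing to the mirror, if necessary) we may assume $q>0$, so $1/q \leq 1$. Because $K'$ is nontrivial, $g(K') \geq 1$, hence $2g(K')-1 \geq 1$. Combining these bounds forces equality throughout: $q=1$ and $g(K')=1$.

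Second, I would invoke the theorem, due to Ghiggini in the genus-one case (and extended to all genera by Ni), that any knot admitting an L-space surgery is fibered. Thus $K'$ is a fibered knot of genus one. By a classical result in $3$-manifold topology, the fibered genus-one knots in $S^3$ are exactly the right- and left-handed trefoils $T(2,\pm 3)$ and the figure-eight knot. Of these three candidates, the figure-eight is excluded immediately: its Alexander polynomial is $-t+3-t^{-1}$, and the \os\ computation of $\HFhat$ for L-space knots forces the Alexander polynomial of any L-space knot to have all nonzero coefficients equal to $\pm 1$, alternating in sign. Hence $K'$ must be a trefoil, completing the proof. (The sign convention on $q$ then picks out the right-handed trefoil, but the statement only asks for ``the trefoil'' up to mirror.)

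The main work is really in assembling known inputs rather than any new computation. The deepest ingredient is Ghiggini's fiberedness result in the genus-one setting; once that is in hand, the slope restriction reduces the problem to genus one, and the Alexander polynomial criterion eliminates the figure-eight by a single inspection. The only place where care is needed is checking that the sign conventions on $p/q$ from the preceding paragraph of the paper are consistent with the hypotheses of the surgery classification theorem for L-space knots, which they are after the mirroring reduction already performed.
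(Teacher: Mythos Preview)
Your proof is correct and uses the same essential ingredients as the paper: a genus bound coming from L-space surgery constraints, followed by Ghiggini's result in the genus-one case. The packaging differs slightly. The paper first reduces to integer surgery via \cite[Proposition 9.5]{OSz2005-3} (so that $S^3_1(K')$ is an L-space) and then invokes \cite[Corollary 8.5]{KMOSz2007} to get $g\le 1$; you instead appeal directly to the rational L-space slope characterization $p/q\ge 2g-1$, which yields $q=1$ and $g=1$ in one stroke and has the virtue of making the conclusion $q=1$ explicit (the paper's proof leaves this implicit). For the endgame, the paper simply cites Ghiggini for ``$K'$ is the trefoil,'' whereas you unpack this into fiberedness plus the classification of genus-one fibered knots plus the Alexander polynomial obstruction for the figure-eight; this is exactly what lies behind Ghiggini's statement, so the two arguments are really the same.
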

\begin{proof}
If $S^3_{1/q}(K')$ is an L-space,  \cite[Proposition 9.5]{OSz2005-3} implies that $S^3_{1}(K')$ is an L-space as well. Applying \cite[Corollary 8.5]{KMOSz2007} (see also \cite{Hedden2007,Rasmussen2007}) gives the bound $g\le 1$, where $g$ denotes the genus of the knot $K'$. Since $K'$ is non-trivial by assumption, $g=1$ and it follows from \cite{Ghiggini2007} (see also \cite{Ni2007}) that $K'$ is the trefoil.   
\end{proof}

\begin{remark}\label{rmk:main}An alternate approach comes from the mapping cone formula for rational surgeries \cite{OSz2005-3}. Indeed,  a direct calculation shows that $\rk\HFhat(S^3_{1/q}(K'))\ge 2q-1$, with equality if and only if $K'$ is the trefoil. Moreover, when the genus of $K'$ is greater than 1, $\rk\HFhat(S^3_{1/q}(K'))\ge 4q+1$.  \end{remark}
\parpic[r]{$\begin{array}{c}\includegraphics[scale=0.28]{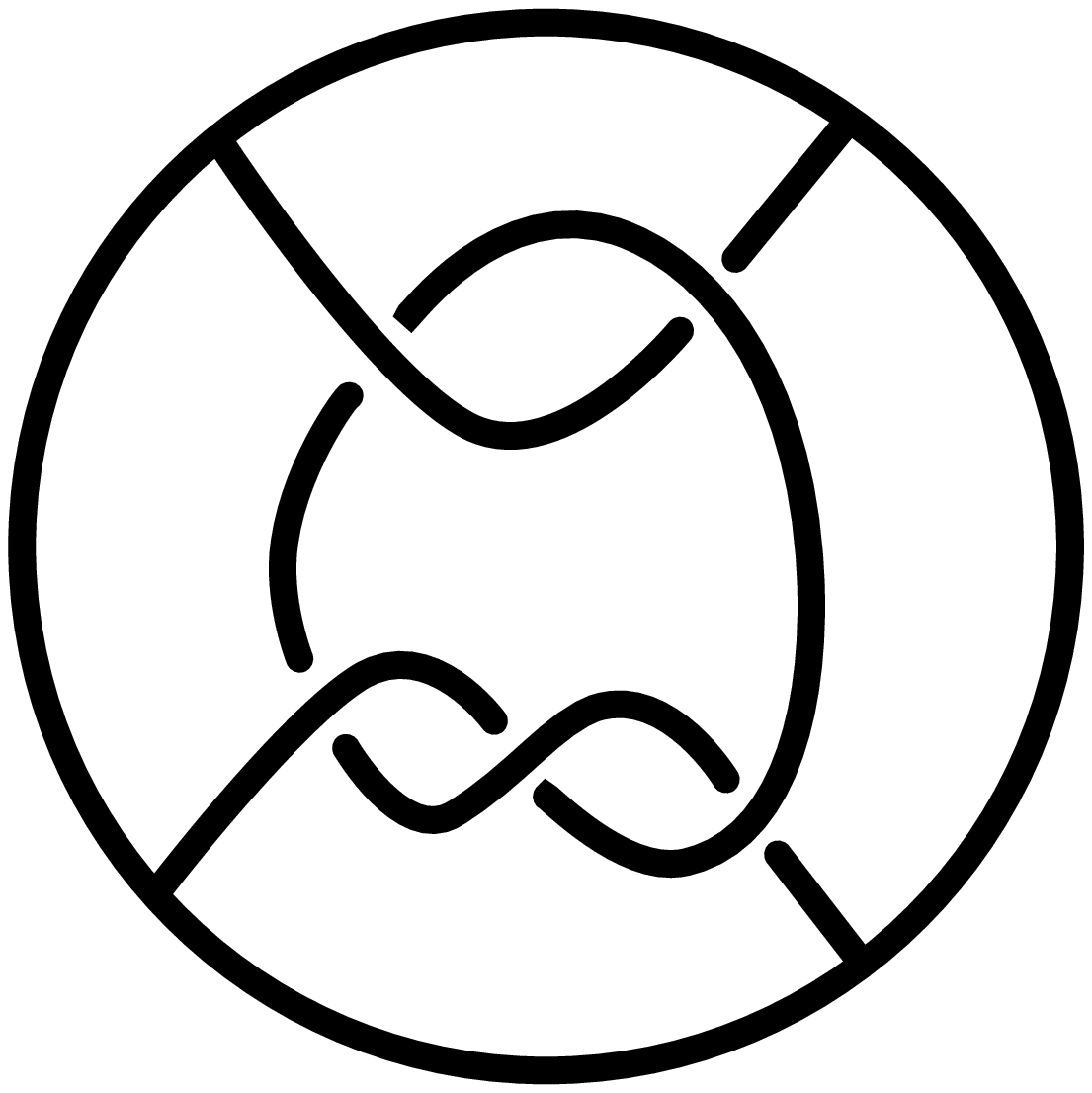}\end{array}$}
Thus we are left to deal with the case when $K'$ is the trefoil. This is a strongly invertible knot, and the tangle associated to the quotient of $S^3\smallsetminus\nu({K'})$ has the form shown on the right. This can be worked out by hand as in \cite{Bleiler1985}, for example. Alternatively, we remark that this tangle is a sum of two rational tangles: This reflects the Seifert fiber structure in the cover. Indeed, the two tangles lift to a pair of solid tori identified along an essential annulus, the cores of which are singular fibers of order 2 and 3. That this is the unique such tangle follows from the fact that torus knots admit a unique strong inversion \cite{Schreier1924}. From here it is straightforward to identify the $(-2,3,5)$-pretzel knot (the knot $10_{124}$ in Rolfsen's table \cite{Rolfsen1976}) as the appropriate $K$ for which  $\Sigma_K^2\cong S^3_1(K')$. The conclusion now follows by direct calculation: {\tt KhoHo} \cite{KhoHo} confirms that $\rk\Khred(10_{124})=7$, proving Theorem \ref{thm:main}. 

\section{Satellite knots whose Khovanov homology detect the unknot}
\label{sec:cor}

An interesting consequence of Theorem \ref{thm:main} is that it allows us to show that the Khovanov homology of many satellite constructions detects the unknot.     To make this precise, recall that associated to any non-trivial knot in a solid torus $P\hookrightarrow V\cong S^1\times D^2$, we obtain an operation
$$ P(-): \{\mathrm{Knots}\} \longrightarrow \{\mathrm{Knots}\},$$
\noindent where $P(K)$ is defined as the image of $P$ under an identification of $V$ with $\nu(K)$.\footnote{This identification requires a choice of framing which we obtain from a Seifert surface for $K$.}  $P(K)$ is called a {\em satellite} of $K$ with  {\em pattern} $P$.  

Observe that if $K_1\simeq K_2$ then  $P(K_1)\simeq P(K_2)$, so that the operation defined by $P$ does indeed descend to isotopy classes of knots.  Given an invariant of a knot, $K$, this observation allows us to define infinite families of invariants:  Simply apply the invariant to all the various satellites of $K$.  

In the case at hand, the invariant we are considering is the reduced Khovanov homology.  Suppose that we choose a pattern $P$ so that $P(K)$ has tangle unknotting number one for {\em every} knot $K$, and so that $P(K)\simeq U$, if and only if $U$ is the unknot.   In this situation, Theorem \ref{thm:main} applies to show that $\rk\Khred(P(K))=1$ if and only if $P(K)\simeq U$ which, in turn, happens if and only if $K\simeq U$.  Thus we arrive at Corollary \ref{cor:satellite}.

\labellist 
	\pinlabel $\cdot\!\cdot\!\cdot$ at 435 417 	
	\pinlabel {$\underbrace{\phantom{aaaaaaaa}}_n$} at 395 375
\endlabellist
\piccaptiontopside{\label{pic:satellitetorus}}
\parpic[r]{$\begin{array}{c}\includegraphics[scale=0.28]{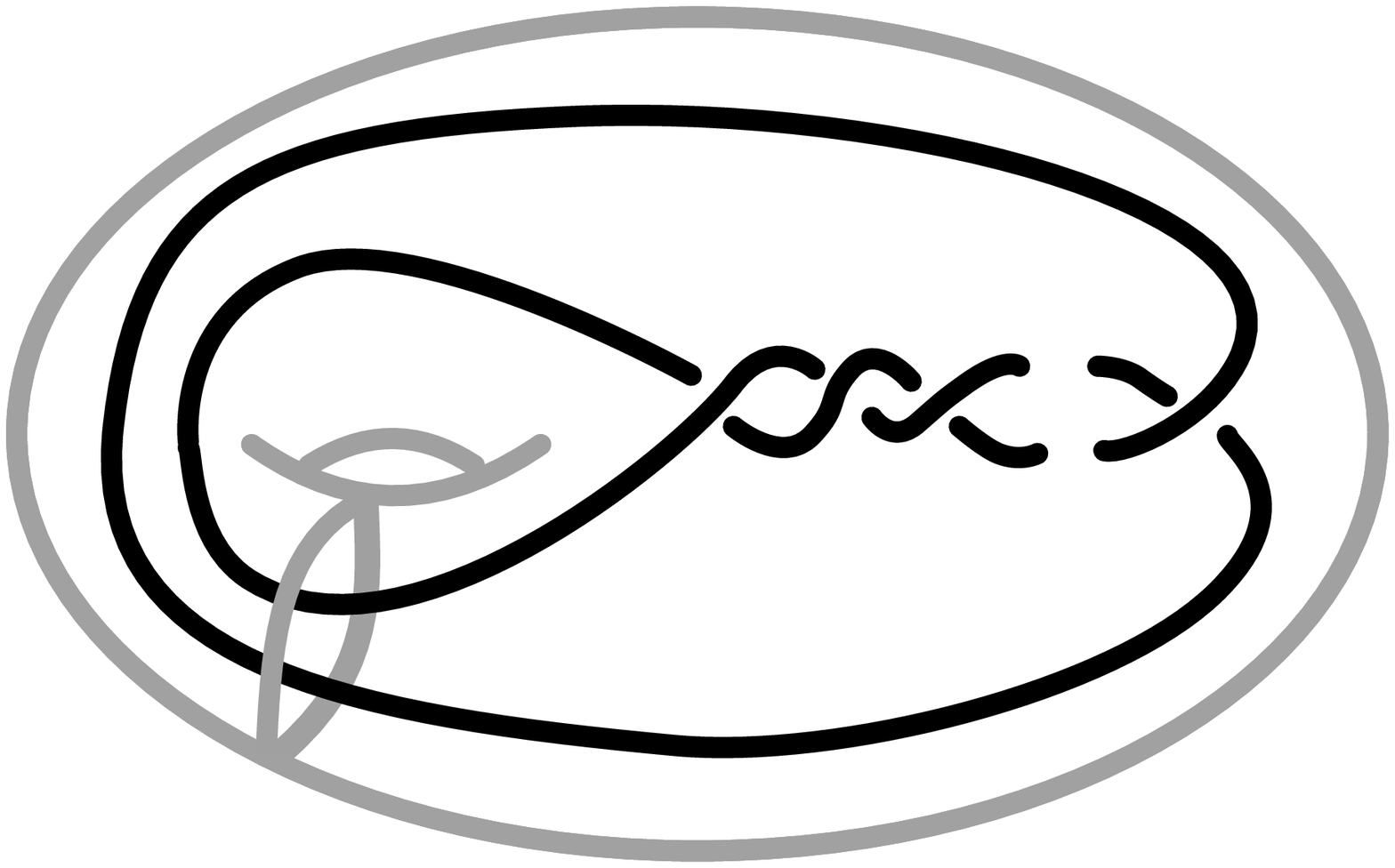}\end{array}$}

A simple infinite family of satellite constructions whose Khovanov homologies detect the unknot are provided by the patterns shown in the solid torus  on the right, 
where $n$ denotes the number of half twists. It is straightforward to verify that each of these patterns satisfies the hypotheses of Corollary \ref{cor:satellite}.   Note that the $(2,\pm1)$-cable of $K$ is obtained for $n=\pm1$. Similarly, the positive (respectively negative) clasp, untwisted Whitehead double of $K$ is obtained for $n=2$ (respectively $n=-2$). The case $n=0$ is always the unknot, while the convention $n=\frac{1}{0}$ gives rise to the $2$-cable of the knot $K$ (this latter satellite is a link, and was handled by a different technique in \cite{Hedden2008}).

We remark that for  the satellites specified by the figure, it is straightforward to determine the knot in $S^3$ on which one performs surgery to obtain the double branched covers.  If we let $K_n$ denote the satellite of $K$ with pattern given by the figure with $n$ half-twists, then  \[\Sigma_{K_n}^2\cong S^3_{1/n}(K\# K^r)\] where $K^r$ denotes $K$ with the orientation reversed. Indeed, there is an obvious strong inversion on $K\# K^r$ exchanging the two summands. Extending this strong inversion over the surgery torus yields a strong inversion on $S^3_{1/n}(K\# K^r)$. From this, one can see that the quotient is $S^3$ and the image of the fixed-point set is $K_n$. See Akbulut and Kirby  \cite{AK1980} or Montesinos and Whitten \cite{MW1986} for details.   Coupled with Remark \ref{rmk:main}, this yields the explicit bound
$$\rk\Khred(K_n)\ge 4n+1,$$
whenever $K$ is non-trivial. This bound, together with the skein exact sequence for Khovanov homology \cite{Khovanov2000,Rasmussen2005}, recovers the fact that the reduced Khovanov homology of the $2$-cable of a knot detects the unknot \cite{Hedden2008}.

\bibliographystyle{plain}
\bibliography{mybib}

\end{document}